\numberwithin{equation}{section}
\newtheorem{theorem}{Theorem}[section]
\newtheorem{lemma}[theorem]{Lemma}
\newtheorem{conjecture}[theorem]{Conjecture}
\newtheorem{claim}[theorem]{Claim}
\newtheorem{remark}[theorem]{Remark}
\newtheorem{construction}[theorem]{Construction}
\title{Counterexamples to the List Square Coloring Conjecture}
\author{
\begin{tabular}{c}
{\sc Seog-Jin KIM\thanks{This work was supported by the National Research Foundation of Korea(NRF) grant funded by the Korea government (MEST) (No. 2011-0009729).}} \\
[1ex]
{\small
Department of Mathematics Education, Konkuk University,
Seoul 143-701, Korea} \\
{\small
{\it E-mail address}: {\tt skim12@konkuk.ac.kr}} \\
\\
{\sc Boram PARK\thanks{Corresponding author: borampark22@gmail.com}}\\
[1ex]
{\small National Institute for Mathematical Sciences, Daejeon 306-390, Korea} \\
{\small
{\it E-mail address}: {\tt borampark22@gmail.com}} \\
\end{tabular}
}
\date{May 10, 2013}
\begin{document}

\maketitle

\begin{abstract}
The square $G^2$ of  a graph $G$ is the graph defined on $V(G)$ such that two vertices $u$ and $v$ are adjacent in $G^2$ if the distance between $u$ and $v$ in $G$ is at most 2.  Let $\chi(H)$ and $\chi_l(H)$ be the chromatic number and the list chromatic number of $H$, respectively.  A graph $H$ is called {\em chromatic-choosable} if $\chi_l (H) = \chi(H)$.
It is an interesting problem to find graphs that are
 chromatic-choosable.
Kostochka and Woodall \cite{KW2001} conjectured that $\chi_l(G^2) = \chi(G^2)$ for every graph $G$, which is called  List Square Coloring Conjecture.
In this paper, we give infinitely many counterexamples to the conjecture.
Moreover, we show  that the value $\chi_l(G^2) - \chi(G^2)$ can be arbitrary large.
\end{abstract}


\noindent
{\bf Keywords:} Square of graph, chromatic number, list chromatic number

\noindent
{\bf 2010 Mathematics Subject Classification: 05C15}

\section{Introduction}

A proper $k$-coloring $\phi: V(G) \rightarrow \{1, 2, \ldots, k \}$ of a graph $G$ is an assignment of colors to the vertices of $G$ so that any two adjacent vertices  receive distinct colors.
The {\em chromatic number} $\chi(G)$ of a graph $G$ is the least $k$ such that there exists a proper $k$-coloring of $G$.
A list assignment $L$ is an assignment of lists of colors to vertices.
A graph $G$ is said to be {\em $k$-choosable} if for any list $L(v)$ of size at least $k$, there exists a proper coloring $\phi$ such that $\phi(v) \in L(v)$ for every $v \in V(G)$.
The least $k$ such that $G$ is $k$-choosable is called the {\it list chromatic number} $\chi_\ell(G)$ of a graph $G$. Clearly $\chi_l(G) \geq \chi(G)$ for every graph $G$.

A graph $G$ is called {\em chromatic-choosable} if $\chi_l (G) = \chi(G)$. It is an interesting problem to determine which graphs are chromatic-choosable.  There are several famous conjectures that some  classes of graphs
are chromatic-choosable including the List Coloring Conjecture.

Given a graph $G$, the {\em total graph}  $T(G)$ of $G$  is the graph such that
$V(T(G)) = V(G) \cup E(G)$, and two vertices $x$ and $y$ are adjacent in $T(G)$ if
(1) $x,y\in V(G)$,  $x$ and $y$ are adjacent vertices in $G$, or
(2) $x,y\in E(G)$,  $x$ and $y$ are adjacent edges in $G$, or
(3) $x\in V(G)$, $y\in E(G)$, and $x$ is incident to $y$ in $G$.
The {\it line graph} $L(G)$ of a graph $G$ is the graph such that $V(L(G))=E(G)$ and two vertices $x$ and $y$ are adjacent in $L(G)$ if and only if $x$ and $y$ are adjacent edges in $G$.

The famous List Coloring Conjecture (or called Edge List Coloring Conjecture) is stated as follows, which was proposed independently by Vizing, by Gupa, by Albertson and Collins, and by Bollob\'{a}s and Harris (see \cite{Toft} for detail).

 \begin{conjecture}\label{LECC}
{\rm \bf (List  Coloring Conjecture)} For any graph $G$, $\chi_l(L(G)) = \chi(L(G))$.
\end{conjecture}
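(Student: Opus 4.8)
The plan is to attack the conjecture by the \emph{kernel method}, the technique with which Galvin resolved the bipartite case. Recall that a \emph{kernel} of a digraph $D$ is an independent set $S\subseteq V(D)$ that meets the out-neighborhood of every vertex not in $S$, and that $D$ is \emph{kernel-perfect} if each of its induced subdigraphs has a kernel. The key lemma, going back to Bondy, Boppana and Siegel, states that if $D$ is an orientation of a graph $H$ with $d_D^+(v)<|L(v)|$ for every $v$ and $D$ is kernel-perfect, then $H$ has a proper coloring from the lists $L$. So the task reduces to this: given a graph $G$ with $\chi(L(G))=k$, construct a kernel-perfect orientation of $L(G)$ in which every out-degree is at most $k-1$.

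First I would carry this out for bipartite $G$, as the model case. Fix a proper edge coloring $c : E(G)\to\{1,\dots,k\}$. For two adjacent edges $e,f$ of $G$ meeting at a vertex $v$ --- that is, for an edge of $L(G)$ --- orient $e\to f$ when $c(e)<c(f)$ and $v$ lies in one fixed part of the bipartition, and orient it the other way otherwise; this is Galvin's orientation. A direct count shows that the out-degree of an edge $e$ in $L(G)$ is at most $(c(e)-1)+(k-c(e))=k-1$. For kernel-perfectness, observe that since $G$ is bipartite, hence triangle-free, every clique of $L(G)$ is contained in the set of edges at a single vertex of $G$; the orientation restricted to any such set is the linear order induced by $c$, hence acyclic, so the orientation is clique-acyclic. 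As line graphs of bipartite graphs are perfect, the theorem of Boros and Gurvich that perfect graphs are kernel-solvable produces a kernel in every induced subdigraph, which is exactly kernel-perfectness.

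The hard part --- and the reason the conjecture remains open --- is the general case, where $L(G)$ need not be perfect: already $L(C_5)=C_5$ is an odd hole. Then the clean implication ``clique-acyclic $+$ perfect $\Rightarrow$ kernel-perfect'' is no longer available, and I expect to be stuck precisely here, with no way in sight to build an orientation of $L(G)$ of out-degree less than $\chi(L(G))$ that is kernel-perfect for an arbitrary graph $G$, nor an argument that avoids kernels altogether. The partial results reachable by the methods above --- Galvin's theorem for bipartite $G$, Kahn's asymptotic equality $\chi_l(L(G))=\chi(L(G))(1+o(1))$, and verifications for $G$ of small maximum degree or restricted structure --- all stop short of the full statement, and closing the remaining gap appears to require a genuinely new idea.
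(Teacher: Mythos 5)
There is no proof to compare against: the statement you were given is Conjecture~\ref{LECC}, the List Coloring Conjecture, which the paper only quotes as background (attributing it to Vizing, Gupta, Albertson--Collins, and Bollob\'as--Harris) and never proves; it is a well-known open problem, and the paper's actual contribution is in the opposite direction, namely disproving the related List Square Coloring Conjecture~\ref{LSCC}. Your proposal, by its own admission, is not a proof either. What you write does correctly reconstruct Galvin's resolution of the bipartite case: fix a proper $k$-edge-coloring, orient each edge of $L(G)$ according to the color order at one side of the bipartition and the reverse order at the other, check the out-degree bound $(c(e)-1)+(k-c(e))=k-1$, and obtain kernel-perfectness from clique-acyclicity plus the perfection of line graphs of bipartite graphs (via Boros--Gurvich kernel-solvability, or Galvin's original stable-matching argument). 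That part is sound.

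The genuine gap is exactly where you say you are stuck: for general $G$ the line graph $L(G)$ need not be perfect (as your $L(C_5)=C_5$ example shows), so the implication ``clique-acyclic orientation of a perfect graph is kernel-perfect'' is unavailable, and no construction of a kernel-perfect orientation of $L(G)$ with all out-degrees below $\chi(L(G))$ is known, nor is any alternative argument. Nothing in this paper supplies the missing step, so the honest conclusion is that the statement remains unproved; a referee should treat your text as a survey of the bipartite case and of why the kernel method stalls, not as a proof of Conjecture~\ref{LECC}.
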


It was shown that the List  Coloring Conjecture is true for some graph families, see~\cite{G1995, PW1999, W1999}.
On the other hand, Borodin, Kostochka, Woodall \cite{BKW1997} proposed the following conjecture as a version of the famous List Coloring Conjecture for total graphs.

\begin{conjecture}\label{LTCC}
{\rm \bf (List Total Coloring Conjecture)} For any graph $G$, $\chi_l(T(G)) = \chi(T(G))$.
\end{conjecture}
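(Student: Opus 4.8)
The plan is to attack the List Total Coloring Conjecture by Galvin's kernel method, after first placing it inside the square-coloring framework of this paper. The starting observation is the folklore identity $T(G)\cong S(G)^2$, where $S(G)$ denotes the subdivision of $G$ obtained by replacing each edge with a path of length two: in $S(G)$ a vertex of $G$ and an incident edge lie at distance $1$, two edges sharing an endpoint lie at distance $2$, and two adjacent vertices of $G$ lie at distance $2$, so passing to $S(G)^2$ reproduces exactly the three adjacency conditions defining $T(G)$. Thus the conjecture asserts that the List Square Coloring Conjecture holds on the restricted family of subdivision graphs. Since the counterexamples constructed in this paper need not be subdivisions, proving the total version amounts to showing that this special square coloring problem is far better behaved than the general one.

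Writing $k=\chi(T(G))$ for the total chromatic number, I would aim to produce an orientation $\cD$ of $T(G)$ that is simultaneously \emph{kernel-perfect} --- every induced subdigraph contains a kernel, i.e.\ an independent set meeting every nonempty out-neighborhood --- and has maximum out-degree at most $k-1$. By the kernel lemma of Bondy, Boppana, and Siegel underlying Galvin's theorem, any list assignment $L$ with $|L(v)|\ge k > d^+_{\cD}(v)$ then admits a proper coloring, giving $\chi_l(T(G))\le k$ and hence equality. The model is Galvin's proof of the List Coloring Conjecture for bipartite graphs, where a proper edge coloring together with the bipartition yields an orientation whose cliques each carry a source, making it kernel-perfect. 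The concrete step is to convert an optimal total coloring $\phi\colon V(T(G))\to\{1,\dots,k\}$ into such an orientation, orienting within each maximal clique of $T(G)$ so that out-degrees stay below $k$ while every induced subdigraph retains a kernel.

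The hard part is precisely this construction. Total graphs are neither line graphs nor perfect in general, so neither Galvin's bipartite argument nor the Boros--Gurvich kernel-solvability theorem for perfect graphs applies directly; moreover a vertex $v$ of maximum degree in $G$ satisfies $\deg_{T(G)}(v)=2\Delta(G)$, whereas we may only afford out-degree $k-1$, and $k$ is of order $\Delta(G)$ (it satisfies $\Delta(G)+1\le k$, conjecturally $k\le\Delta(G)+2$). Hence a naive acyclic orientation induced by $\phi$, which is automatically kernel-perfect, has out-degrees up to roughly twice the admissible threshold and is useless. Balancing the out-degrees down to $k-1$ while preserving kernel-perfectness is the crux, and I do not expect a clean orientation to exist uniformly over all $G$. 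As a fallback I would settle for the asymptotic statement $\chi_l(T(G))=(1+o(1))\chi(T(G))$ via the R\"{o}dl nibble / semi-random method, in the spirit of Kahn's asymptotic verification of the List Coloring Conjecture, or test the Alon--Tarsi / Combinatorial Nullstellensatz route by trying to exhibit an orientation of $T(G)$ with all out-degrees below $k$ in which the numbers of even and odd Eulerian suborientations differ; controlling that sign count uniformly over all $G$ is, however, the same essential difficulty in a different guise.
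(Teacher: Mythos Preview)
The statement you are addressing is a \emph{conjecture}, not a theorem, and the paper does not prove it. The List Total Coloring Conjecture is stated here only as background: the paper's point is that it is implied by the List Square Coloring Conjecture via the identity $T(G)\cong S(G)^2$ (which you also record), and that while the paper disproves the latter, its counterexamples are not subdivisions, so the total version remains open. There is no ``paper's own proof'' to compare against.

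Your proposal is accordingly not a proof but a research plan, and you are candid about this: you identify the kernel-method approach, correctly diagnose why the naive acyclic orientation from a total coloring fails (out-degrees near $2\Delta$ against a budget of roughly $\Delta$), and then concede that you ``do not expect a clean orientation to exist uniformly over all $G$.'' That is the genuine gap. The fallback suggestions --- a Kahn-style nibble for $(1+o(1))\chi(T(G))$, or an Alon--Tarsi orientation count --- are reasonable directions but are likewise programs rather than arguments, and the asymptotic version would in any case not establish the conjecture as stated. In short, nothing here is wrong, but nothing here is a proof either; the List Total Coloring Conjecture is still open, and the paper makes no claim otherwise.
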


For a simple graph $G$, the {\it square} $G^2$ of $G$ is defined such that $V(G^2) = V(G)$ and two vertices $x$ and $y$ are adjacent in $G^2$ if and only if the distance between $x$ and $y$ in $G$ is at most 2.
Kostochka and Woodall \cite{KW2001} proposed the following conjecture.

\begin{conjecture} \label{LSCC}
{\rm \bf (List Square Coloring Conjecture)} For any graph $G$, $\chi_l(G^2)=\chi(G^2)$.
\end{conjecture}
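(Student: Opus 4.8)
Conjecture~\ref{LSCC} turns out to be false, and the aim is to exhibit graphs $G$ with $\chi_l(G^2)>\chi(G^2)$ and, moreover, to make the gap $\chi_l(G^2)-\chi(G^2)$ arbitrarily large. Here is how I would go about finding such graphs. A useful orienting remark is that, conjecturally (Ohba) and provably in many cases, a graph on at most $2\chi+1$ vertices is chromatic-choosable; hence any counterexample $G$ must satisfy $|V(G)|\ge 2\chi(G^2)+2$, and in particular $G^2$ must contain an independent set of size $3$ --- equivalently, $G$ must contain three vertices that are pairwise at distance at least $3$. So $G$ cannot be dense, and one should look among bipartite or bounded-degree host graphs.

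The plan is to build $G$ so that $G^2$ carries an unsatisfiable list assignment while still being cheap to color properly. I would first try $G$ bipartite with parts $X$ and $Y$; then the edges of $G^2$ split as $G^2[X]\cup G^2[Y]\cup E(G)$, where $G^2[X]$ and $G^2[Y]$ are the ``common-neighbor'' graphs (on $X$ through $Y$, and vice versa) and the cross-edges of $G^2$ are exactly the edges of $G$ itself. This framework grants a good deal of control over $G^2[X]$ and $G^2[Y]$ once $Y$ and the incidences are chosen freely. I would arrange that $X$ carries a clique of $G^2$ of controlled size $k$ (so $\chi(G^2)\ge k$ transparently), and then tune $Y$ together with the $X$--$Y$ edges so that: (i) an explicit proper $k$-coloring of $G^2$ exists, forcing $\chi(G^2)=k$; and (ii) there is a list assignment $L$ with $|L(v)|=k$ for every $v$, built by splitting a color pool into disjoint blocks and imposing on each block an Erd\H{o}s--Rubin--Taylor / covering-design obstruction (the one that already defeats $\chi_l(K_{3,3})$), so that the colors available to the $X$-vertices and to their $G$-neighbors in $Y$ get squeezed and no global proper $L$-coloring survives.

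Concretely, the steps in order are: (1) write down an explicit family $\{G_\ell\}$ carrying a growth parameter $\ell$; (2) identify $G_\ell^2$ \emph{exactly}, checking in particular that squaring does not create stray distance-2 edges that would either inflate $\chi(G_\ell^2)$ or hand the colorer an escape; (3) exhibit a proper coloring to pin down $\chi(G_\ell^2)$; (4) construct the bad list assignment and prove that no proper $L$-coloring exists, which I would reduce to the infeasibility of a Hall-type (system-of-distinct-representatives) problem across the bipartition; and (5) verify $\chi_l(G_\ell^2)-\chi(G_\ell^2)\to\infty$, for instance by stacking independent copies of the basic gadget on disjoint color blocks, or by folding the growth directly into the bipartite structure.

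The main obstacle is steps (2) and (4) together: one needs a single graph whose square is simultaneously complete \emph{enough} that the bad list assignment genuinely sticks (it must resist being satisfied by exploiting the profusion of distance-2 edges that squaring produces) yet incomplete \emph{enough} that $\chi(G^2)$ stays small. Squaring tends to homogenize structure toward complete-multipartite-like graphs whose parts have size at most $2$ --- precisely the regime the Ohba phenomenon renders chromatic-choosable --- so finding the right host graph, and then rigorously checking that the explicit list assignment admits no proper coloring, is where the real effort goes. Once one genuine counterexample is secured, amplifying to an unbounded gap should be comparatively routine.
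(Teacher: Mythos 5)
You correctly recognize that the conjecture is false and that the goal is an explicit family with $\chi_\ell(G^2)-\chi(G^2)\to\infty$, and your outline (identify $G^2$ exactly, pin down $\chi(G^2)$ by an explicit coloring, then kill it with a block-structured list assignment of Erd\H{o}s--Rubin--Taylor type) has the same overall shape as the paper's argument. But the proposal stops exactly where the proof has to start: you never exhibit the family $G_\ell$, never determine $G_\ell^2$, and never prove that any list assignment is unsatisfiable --- indeed you yourself flag your steps (2) and (4) as the unresolved difficulties. That concrete construction is the entire content of the disproof. In the paper, for each prime $n\ge 3$ one takes parts $P_1,\dots,P_n,Q_1,\dots,Q_{n-1}$ of size $n$, joins $w_{i,j}\in Q_i$ to the $j$th row $\{v_{k,L_i(j,k)}:k\in[n]\}$ of the Latin square $L_i(j,k)=j+i(k-1)\pmod n$ (these $L_i$ are mutually orthogonal because $n$ is prime), and adds the cliques $T_j=\{v_{1,j},\dots,v_{n,j}\}$; orthogonality plus an edge-counting argument (a family of $n^2$ pairwise edge-disjoint $n$-cliques saturating the complement of the parts) shows $G^2=K_{n*(2n-1)}$ exactly, and the known bound $\chi_\ell(K_{n*r})>(n-1)\lfloor(2r-1)/n\rfloor$ of Vetr\'ik then gives $\chi_\ell(G^2)-\chi(G^2)\ge n-1$ since $\chi(G^2)=2n-1$. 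Without some such explicit gadget and verification, the proposal is a research plan, not a proof.

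Two further points where your intended route diverges in ways that would cost you. First, you restrict to bipartite hosts $X\cup Y$, but the paper's $G$ is deliberately \emph{not} bipartite: the cliques $T_j$ inside $P$ are what make every vertex of $P$ adjacent in $G^2$ to every vertex of $Q$ (Lemma~\ref{adjacent_all}) while each $Q$-vertex keeps only one $G$-neighbor per $P_k$, so the parts stay independent. In a bipartite host the cross edges of $G^2$ are exactly $E(G)$, so forcing all cross pairs adjacent (which is what lets the complete-multipartite list obstruction apply off the shelf) would force $G$ to be complete bipartite and hence $G^2$ complete; you would instead have to prove non-$L$-colorability of a non-complete-multipartite square by hand, which is considerably harder than invoking the known $K_{n*r}$ bound. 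Second, your worry that squares homogenize toward parts of size at most $2$ (the Ohba regime) is precisely the obstacle the Latin-square construction is designed to beat: it produces a square whose parts all have size $n$, with $2n-1$ parts, which is squarely outside the chromatic-choosable range.
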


Note that the List Square Coloring Conjecture implies the List Total Coloring Conjecture.
If $H$ is the graph obtained by placing a vertex in the middle of every edge of a graph $G$, then
$H^2 = T(G)$.  Hence if the List Square Coloring Conjecture is true for a special class of bipartite graphs, then the List Total Coloring Conjecture is true.

\medskip

The List Square Coloring Conjecture has attracted a lot of  attention and been cited in many papers related with coloring problems so far, and it has been widely accepted to be true.
The List Square Coloring Conjecture has been proved for several small classes of graphs.

\medskip
In this paper, we disprove the List Square Coloring Conjecture by showing that there exists a graph $G$  such that
$\chi_l (G^2) \neq \chi(G^2)$.
We show that for each prime $n \geq 3$,
there exists a graph $G$ such that $G^2$ is the complete multipartite graph $K_{n*(2n-1)}$, where
$K_{n*(2n-1)}$ denotes the complete multipartite graph with $(2n-1)$ partite sets in which each partite set has size $n$.
Note that  $\chi_l ( K_{n*(2n-1)}) >  \chi( K_{n*(2n-1)})$ for every integer $n \geq 3$. Thus there exist infinitely many counterexamples to the List Square Coloring Conjecture.
Moreover,
we show that the gap between $\chi_l (G^2)$  and $\chi(G^2)$ can be arbitrary large, using
the property that $\chi_l ( K_{n*(2n-1)}) -  \chi( K_{n*(2n-1)}) \geq n-1$ for every integer $n \geq 3$.

\medskip
In the next section, first we construct a graph $G$, and next we will show that $G^2$ is a complete multipartite graph by proving several lemmas.

\section{Construction}

Let $[n]$ denote $\{1,2,\ldots,n\}$. A \textit{Latin square} of order $n$ is an $n\times n$ array such that in each cell, an element of $[n]$ is arranged and  there is no same element in each row and each column.
For a Latin square $L$ of order $n$, the element on the $i$th row and the $j$th column is denoted by $L(i,j)$.
For example, $L$ in Figure \ref{Latin-square}  is a Latin square of order 3, and  $L(1,2)=2$, $L(1,3)=3$, and $L(3,2)=3$.
Two Latin squares $L_1$ and $L_2$ are \textit{orthogonal} if for any $(i,j) \in [n]\times [n]$, there exists unique $(k,\ell)\in[n]\times [n]$ such that $L_1(k,\ell)=i$ and $L_2(k,\ell)=j$. For example, $L_1$ and $L_2$ in Figure \ref{Latin-square}  are orthogonal.
\begin{figure}[b!]
\vspace{0.5cm}
\[
L={\footnotesize\begin{tabular}{|c|c|c|}
                  \hline
                  1 & 2 & 3 \\ \hline
                  3 & 1 & 2 \\ \hline
                  2 & 3 & 1 \\
                  \hline
                \end{tabular}}  \hspace{1cm}
L_1={\footnotesize\begin{tabular}{|c|c|c|}
                  \hline
                  1 & 2 & 3 \\ \hline
                  2 & 3 & 1 \\ \hline
                  3 & 1 & 2 \\
                  \hline
                \end{tabular}} \hspace{1cm}
                L_2={\footnotesize\begin{tabular}{|c|c|c|}
                  \hline
                  1 & 3 & 2 \\ \hline
                  2 & 1 & 3 \\ \hline
                  3 & 2 & 1 \\
                  \hline
                \end{tabular}}
\]
\caption{Latin squares $L, L_1$ and $L_2$ of order $3$}
\label{Latin-square}
\end{figure}

From now on, we fix a prime number $n$ with $n\ge 3$ in this section.
For $i\in [n]$,  we define a Latin square $L_i$ of order $n$ by
\begin{eqnarray} \label{eq:Latin}
 L_i(j,k)= j+i(k-1) \pmod{n}, \quad \mbox{ for } (j, k) \in [n] \times [n].
\end{eqnarray}
Then it is  (also well-known) easily checked that $L_i$ is a Latin square of order $n$ and $\{L_1,L_2,\ldots, L_{n-1}\}$ is a  family of mutually orthogonal Latin squares of order $n$ as $n$ is prime (see page 252 in \cite{Van-Lint}).
For example, in  Figure~\ref{Latin},  $L_1$, $L_2$, $L_3$, and $L_4$ are Latin squares of order 5 when $n = 5$.
\begin{figure}
\[
L_1= {\footnotesize \begin{tabular}{|c|c|c|c|c|}
 \hline
  1 & 2 & 3 & 4 & 5 \\ \hline
  2 & 3 & 4 & 5 & 1 \\ \hline
  3 & 4 & 5 & 1 & 2 \\ \hline
  4 & 5 & 1 & 2 & 3 \\ \hline
  5 & 1 & 2 & 3 & 4 \\ \hline
\end{tabular}}
 \quad \quad  L_2={\footnotesize \begin{tabular}{|c|c|c|c|c|}
  \hline
  1 & 3 & 5 & 2 & 4 \\ \hline
  2 & 4 & 1 & 3 & 5 \\ \hline
  3 & 5 & 2 & 4 & 1 \\ \hline
  4 & 1 & 3 & 5 & 2 \\ \hline
  5 & 2 & 4 & 1 & 3 \\ \hline
\end{tabular}}  \]
\[  L_3={\footnotesize\begin{tabular}{|c|c|c|c|c|}
  \hline
  1 & 4 & 2 & 5 & 3 \\ \hline
  2 & 5 & 3 & 1 & 4 \\ \hline
  3 & 1 & 4 & 2 & 5 \\ \hline
  4 & 2 & 5 & 3 & 1 \\ \hline
  5 & 3 & 1 & 4 & 2 \\ \hline
\end{tabular}}
\quad \quad  L_4={\footnotesize\begin{tabular}{|c|c|c|c|c|}
  \hline
  1 & 5 & 4 & 3 & 2 \\ \hline
  2 & 1 & 5 & 4 & 3 \\ \hline
  3 & 2 & 1 & 5 & 4 \\ \hline
  4 & 3 & 2 & 1 & 5 \\ \hline
  5 & 4 & 3 & 2 & 1 \\ \hline
\end{tabular}}
\]
\caption{$\{L_1,L_2,L_3,L_4\}$ is a family of mutually orthogonal Latin squares of order $n$ defined in (\ref{eq:Latin}).}
\label{Latin}
\end{figure}

\medskip

Now we will construct a graph $G$ which is a counterexample to Conjecture~\ref{LSCC}.

\begin{construction}\label{construction} \rm
For each prime number $n \geq 3$, we construct a graph $G$ with $2n^2-n$ vertices as follows.
For $1 \leq i \leq n$, let $P_i$ be the set of $n$ elements such that
\begin{eqnarray*}
P_i&=&\{ v_{i,1}, v_{i,2}, ..., v_{i,n}\}
\end{eqnarray*}
and for $1 \leq j \leq n-1$, let $Q_j$ be the set of $n$ elements such that
\begin{eqnarray*}
Q_j&=&\{ w_{j,1}, w_{j,2}, ...., w_{j,n}\}.
\end{eqnarray*}
Let $\{L_1,L_2,\ldots, L_{n-1}\}$ is the family of mutually orthogonal Latin squares of order $n$
obtained by (\ref{eq:Latin}).  Graph $G$ is defined as follows.
\begin{itemize}
\item[] $V(G) = \left( \cup_{i =1}^{n} P_i \right) \bigcup \ \left( \cup_{j =1}^{n-1} Q_j \right) = P_1\cup  \cdots \cup P_n \cup Q_1\cup \cdots \cup Q_{n-1}$.
\item[] $E(G) = E_1 \cup E_2$ such that
\begin{eqnarray*}
E_1&=&\bigcup_{i\in [n-1]}\bigcup_{j\in [n]} \{ w_{i,j}  v_{k,L_i(j,k)} : 1 \leq k \leq n \}, \\
E_2&=& \bigcup_{j\in [n]} \{ xy : x,y\in T_j\},
\end{eqnarray*}
where
\[T_j=\{ v_{1,j}, v_{2,j}, \ldots, v_{n,j}\}  \quad \mbox{for }1 \leq j \leq n.\]
\end{itemize}
\end{construction}

\medskip

That is, for each $i\in [n]$, $T_i$ is a clique of size $n$ in $G$, and  $T_1$, $T_2$, \ldots, $T_n$ are mutually vertex disjoint.
And for  $i\in [n-1]$ and  for  $j\in [n]$,
\begin{eqnarray}\label{Neighbor}
N_G(w_{i,j})&=&\{ v_{1,L_i(j,1)} ,v_{2,L_i(j,2)}, ...,  v_{n,L_i(j,n)} \},
\end{eqnarray}
which is  obtained  by reading the $j$th row of the Latin square $L_i$ defined in~(\ref{eq:Latin}).
See Figure~\ref{fig1} for an illustration of the case when $n=3$.

\begin{figure}[b!]
\begin{center}
\psfrag{a}{\footnotesize$v_{1,1}$}
\psfrag{b}{\footnotesize$v_{1,2}$}
\psfrag{c}{\footnotesize$v_{1,3}$}
\psfrag{d}{\footnotesize$v_{2,1}$}
\psfrag{e}{\footnotesize$v_{2,2}$}
\psfrag{f}{\footnotesize$v_{2,3}$}
\psfrag{g}{\footnotesize$v_{3,1}$}
\psfrag{h}{\footnotesize$v_{3,2}$}
\psfrag{i}{\footnotesize$v_{3,3}$}
\psfrag{m}{\footnotesize$w_{1,1}$}
\psfrag{n}{\footnotesize$w_{1,2}$}
\psfrag{o}{\footnotesize$w_{1,3}$}
\psfrag{p}{\footnotesize$w_{2,1}$}
\psfrag{q}{\footnotesize$w_{2,2}$}
\psfrag{r}{\footnotesize$w_{2,3}$}
\psfrag{A}{\footnotesize$P_1$}
\psfrag{B}{\footnotesize$P_2$}
\psfrag{C}{\footnotesize$P_3$}
\psfrag{E}{\footnotesize$Q_1$}
\psfrag{F}{\footnotesize$Q_2$}
\psfrag{j}{$N_G(w_{1,1})=\{ v_{1,\bf{1}},v_{2,\bf{2}},v_{3,\bf{3}}\}$}
\psfrag{k}{$N_G(w_{1,2})=\{ v_{1,\bf{2}},v_{2,\bf{3}},v_{3,\bf{1}}\}$}
\psfrag{l}{$N_G(w_{1,3})=\{ v_{1,\bf{3}},v_{2,\bf{1}},v_{3,\bf{2}}\}$}
\psfrag{u}{$N_G(w_{2,1})=\{ v_{1,\bf{1}},v_{2,\bf{3}},v_{3,\bf{2}}\}$}
\psfrag{t}{$N_G(w_{2,2})=\{ v_{1,\bf{2}},v_{2,\bf{1}},v_{3,\bf{3}}\}$}
\psfrag{s}{$N_G(w_{2,3})=\{ v_{1,\bf{3}},v_{2,\bf{2}},v_{3,\bf{1}}\}$}
 \includegraphics[width=9cm]{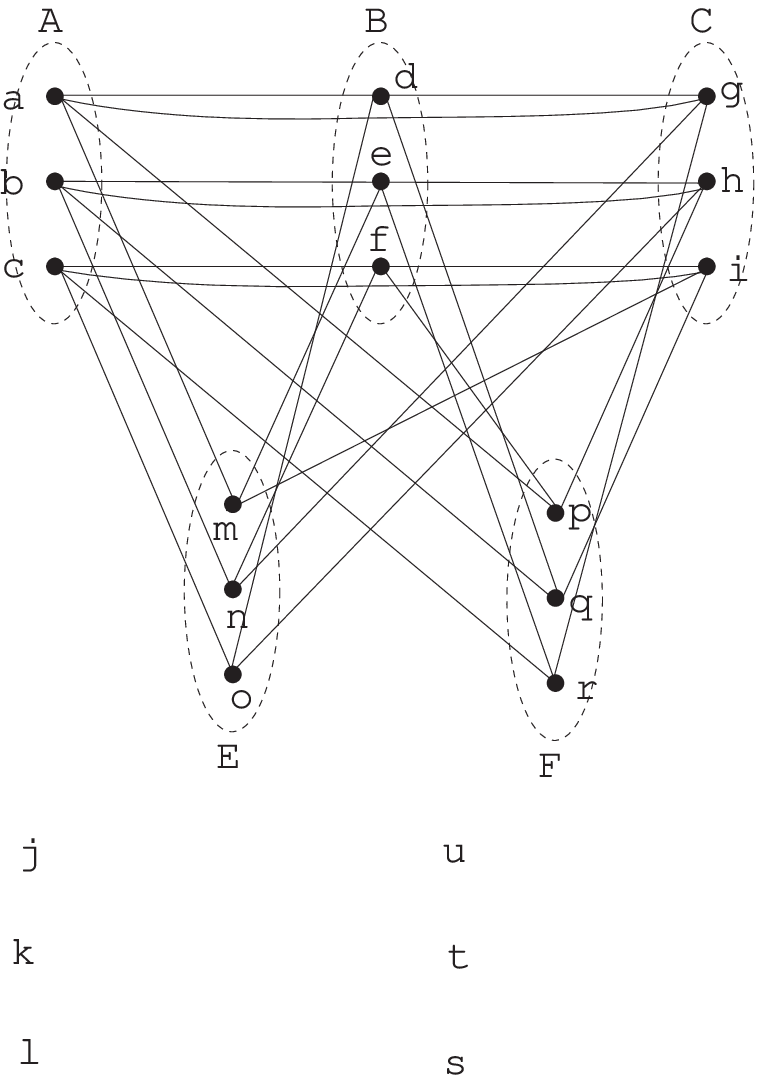}\\
 \[ L_1={\footnotesize\begin{tabular}{|c|c|c|}
                  \hline
                  1 & 2 & 3 \\ \hline
                  2 & 3 & 1 \\ \hline
                  3 & 1 & 2 \\
                  \hline
                \end{tabular} }\qquad
 L_2={\footnotesize\begin{tabular}{|c|c|c|}
                  \hline
                  1 & 3 & 2 \\ \hline
                  2 & 1 & 3 \\ \hline
                  3 & 2 & 1 \\
                  \hline
                \end{tabular}} \]
\caption{The graph $G$ and  Latin squares $L_1$ and $L_2$ defined in (\ref{eq:Latin}) when $n=3$.
In  $N_G(w_{i,j})$, the bold subscripts are the $j$th row of the Latin square $L_i$.
   }\label{fig1}
\end{center}
\end{figure}

\bigskip

From now on, we denote $G$ the graph defined in  Construction \ref{construction}.
We will show that $G^2$ is the complete multipartite graph $K_{n \star (2n-1)}$ whose partite sets are $P_1$,  $\ldots$, $P_n$, $Q_1$,  $\ldots$, $Q_{n-1}$.
For simplicity, let $P=P_1\cup  \cdots \cup P_n$ and $Q=Q_1\cup  \cdots \cup Q_{n-1}$.
From the definition of $G$, the following lemma holds.


\begin{lemma}\label{N(w)}
The graph $G$ satisfies the following properties.
\begin{itemize}
\item[\rm(1)] For every  $x\in Q$,
\[|N_G(x) \cap P_k|=1, \mbox{ for each }  1 \leq k \leq n.\]
\item[\rm(2)] For every $x\in Q$,
\[|N_G(x) \cap T_k|=1, \mbox{ for each }  1 \leq k \leq n.\]
\item[\rm(3)] If $x$ and $y$ are distinct vertices in $Q$, then
\[|N_{G}(x)\cap N_G(y)|\le 1. \]
In particular, if $x,y\in Q_i$ for some $i\in [n-1]$, then
\[|N_{G}(x)\cap N_G(y)|=0. \]
\end{itemize}
\end{lemma}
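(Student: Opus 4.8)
The plan is to read all three statements directly off Construction~\ref{construction}. Fix $x\in Q$ and write $x=w_{i,j}$ with $i\in[n-1]$ and $j\in[n]$; the only fact I need about its neighborhood is the one recorded in~(\ref{Neighbor}), namely $N_G(w_{i,j})=\{\,v_{k,L_i(j,k)}:1\le k\le n\,\}$, obtained by reading off the $j$th row of $L_i$.

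For item (1), since $P_k=\{v_{k,1},\dots,v_{k,n}\}$, the intersection $N_G(w_{i,j})\cap P_k$ is the single vertex $v_{k,L_i(j,k)}$, so it has size $1$. For item (2), a neighbor $v_{\ell,L_i(j,\ell)}$ of $w_{i,j}$ (with $\ell$ ranging over $[n]$) lies in $T_k=\{v_{1,k},v_{2,k},\dots,v_{n,k}\}$ exactly when $L_i(j,\ell)=k$; since the $j$th row $\bigl(L_i(j,1),\dots,L_i(j,n)\bigr)$ of the Latin square $L_i$ is a permutation of $[n]$, there is exactly one such $\ell$, hence $|N_G(x)\cap T_k|=1$.

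For item (3), write $x=w_{i,j}$ and $y=w_{i',j'}$ with $(i,j)\ne(i',j')$. From~(\ref{Neighbor}), a vertex $v_{k,a}$ belongs to $N_G(x)\cap N_G(y)$ if and only if $L_i(j,k)=a=L_{i'}(j',k)$. If $i=i'$ then $j\ne j'$, and $L_i(j,k)=L_i(j',k)$ can never hold because the $k$th column of the Latin square $L_i$ has pairwise distinct entries; thus $N_G(x)\cap N_G(y)=\emptyset$, which also proves the ``in particular'' clause. If $i\ne i'$, suppose for contradiction there are two distinct common neighbors $v_{k_1,a_1}$ and $v_{k_2,a_2}$; then $k_1\ne k_2$, and by~(\ref{eq:Latin}) we have $j+i(k_t-1)\equiv j'+i'(k_t-1)\pmod n$ for $t\in\{1,2\}$, so subtracting gives $(i-i')(k_1-k_2)\equiv 0\pmod n$. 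Since $n$ is prime, $i-i'\not\equiv 0$ (as $i,i'\in[n-1]$ are distinct) and $k_1-k_2\not\equiv 0$ (as $k_1,k_2\in[n]$ are distinct), so the product is nonzero modulo $n$, a contradiction; hence $|N_G(x)\cap N_G(y)|\le 1$.

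None of this presents a genuine difficulty: items (1) and (2) are pure bookkeeping about rows and columns of a single Latin square. The only place where more is needed is the final contradiction in item (3), which uses that $n$ is prime so that a product of two nonzero residues modulo $n$ is again nonzero — this is exactly the mutual orthogonality of $\{L_1,\dots,L_{n-1}\}$ being cashed in, and it is the single step I would flag as requiring care.
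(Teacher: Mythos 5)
Your proof is correct and follows essentially the same route as the paper: items (1) and (2) are read off the neighborhood description in (\ref{Neighbor}), and item (3) uses the explicit formula (\ref{eq:Latin}) together with the primality of $n$ to show $(i-i')(k_1-k_2)\equiv 0 \pmod n$ forces $k_1=k_2$, which is exactly the paper's argument via its Claim (your $i=i'$ case via the column property of $L_i$ is equivalent to the paper's $j\equiv j'\pmod n$ computation, and your direct contradiction for $i\ne i'$ replaces the paper's detour through item (2)). One cosmetic caveat: the closing remark that the primality step is ``exactly the mutual orthogonality being cashed in'' is imprecise---what is actually used is that a row of $L_i$ and a row of $L_{i'}$ agree in at most one position, which follows from the cyclic construction with $n$ prime---but this aside is not load-bearing and does not affect the correctness of the proof.
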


\begin{proof}
Let $x \in Q$, denoted $x=w_{i,j}$.
By (\ref{Neighbor}), it is clear that $N_G(x)$ contains exactly one vertex $v_{k, L_{i}(j,k)}$ of $P_k$ for each $k\in [n]$.
Therefore (1) is true.

\medskip
Let $x$ be a vertex in $Q$, denoted  $x=w_{i,j}$. For each $1\le k\le n$,
there exists unique $\ell\in [n]$ such that $L_{i}(j,\ell)=k$ since
$L_{i}$ is a Latin square.
Then $v_{\ell, L_{i}(j,\ell)}=v_{\ell,k} \in N_G(w_{i,j})$  by (\ref{Neighbor}).
By the definition of $T_k$, $v_{\ell,k} \in T_k$.
Therefore, $v_{\ell,k} \in N_G(w_{i,j})\cap T_k$. From the uniqueness of $\ell$,
$|N_G(w_{i,j})\cap T_k| = 1$.
Thus (2) is true.

\medskip
Next we will prove (3).  Let $x$ and $y$ be  two vertices in $Q$, denoted $x=w_{i,j}$ and $y=w_{i',j'}$.
By (\ref{Neighbor}),
\begin{eqnarray*}
N_G(w_{i,j})  &=&\{ v_{1,L_i(j,1)} ,v_{2,L_i(j,2)}, ..., v_{n,L_i(j,n)} \}, \\
N_G(w_{i',j'})&=&\{ v_{1,L_{i'}(j',1)} ,v_{2,L_{i'}(j',2)}, ...,v_{n,L_{i'}(j',n)} \}.
\end{eqnarray*}
\begin{claim}\label{claim_nw} It holds that
$v_{k,L_i(j,k)}\in N_{G}(w_{i,j})\cap N_G(w_{i',j'})$ if and only if
\begin{equation} \label{equation}
(i-i')(k-1) \equiv j'- j \pmod{n}.
\end{equation}
\end{claim}
\begin{proof} It is easy to see that
\begin{eqnarray*}
&&v_{k,L_i(j,k)}\in N_{G}(w_{i,j})\cap N_G(w_{i',j'})\\
\Leftrightarrow && v_{k,L_i(j,k)}=v_{k,L_{i'}(j',k)}  \\
\Leftrightarrow && L_i(j,k)=L_{i'}(j',k) \\
\Leftrightarrow && j+i(k-1)\equiv j'+i'(k-1) \pmod{n}\\
\Leftrightarrow && (i-i')(k-1) \equiv j'- j \pmod{n}.
\end{eqnarray*}
\end{proof}

Suppose that
$v_{k,L_i(j,k)}, v_{k',L_{i'}(j',k')} \in N_{G}(w_{i,j})\cap N_G(w_{i',j'})$ for some $k,k'\in [n]$.
Then by Claim~\ref{claim_nw},
\begin{eqnarray*}
&& (i-i')(k-1) \equiv j'- j \pmod{n},\\
&& (i-i')(k'-1) \equiv j'- j \pmod{n}.
\end{eqnarray*}
By subtracting two equations,
\[  (i-i')(k-k')\equiv 0 \pmod{n}.\]
First, consider the case when $i-i'\neq 0$.  Then $k-k'\equiv 0 {\pmod{n}}$ since $n$ is prime.
Since $1\le k ,k' \le n$, we have $k=k'$.
Consequently
$\{v_{k,L_i(j,k)}, v_{k',L_{i'}(j',k')} \} = \{v_{k,L_i(j,k)}, v_{k,L_{i'}(j',k)} \}$.
Note that $\{v_{k,L_i(j,k)}, v_{k,L_{i'}(j',k)} \} \subset T_k$.
Thus $N_{G}(w_{i,j})\cap N_G(w_{i',j'}) \subset T_k$.
Therefore  by (2), we have
\[ |N_{G}(w_{i,j})\cap N_G(w_{i',j'})| =|N_{G}(w_{i,j})\cap N_G(w_{i',j'}) \cap T_k|\leq  |N_G(w_{i,j}) \cap T_k| \le  1.\]

Next, consider the case when $i = i'$.
Suppose that $N_{G}(w_{i,j})\cap N_G(w_{i',j'})\neq \emptyset$.
Then (\ref{equation}) is true for some $k$.
Since $i=i'$, (\ref{equation}) is equivalent to $j\equiv j' \pmod{n}$.
Since $1 \leq j, j' \leq n$, we have $j = j'$.
It implies that $w_{i,j}= w_{i',j'}$,  which is a contradiction for the assumption that $w_{i,j}\neq w_{i',j'}$.
Therefore $N_{G}(w_{i,j})\cap N_G(w_{i',j'}) = \emptyset$.
Thus (3) is true.
\end{proof}

\begin{lemma}\label{N(v)}
The graph $G$ satisfies the following properties.
\begin{itemize}
\item[\rm (1)]  If $x\in P$,
\[|N_G(x) \cap Q_k|=1, \mbox{ for each }  1 \leq k \leq n-1.\]
\item[\rm (2)]  If $x$ and $y$ are distinct vertices in $P$, then
\[|N_{G}(x)\cap N_G(y)\cap Q|\le 1. \]
\end{itemize}
\end{lemma}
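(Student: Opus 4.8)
The plan is to exploit the symmetric roles played by $P$ and $Q$ under the Latin square construction, translating statements about neighborhoods of $P$-vertices into the language of the defining congruence $L_i(j,k) = j + i(k-1) \pmod n$. For part (1), fix $x = v_{k,\ell} \in P_k$ (so $\ell \in [n]$ is the column index within $T_\ell$), and fix $i \in [n-1]$. I want to count the vertices $w_{i,j} \in Q_i$ adjacent to $v_{k,\ell}$. By the edge definition $E_1$, $w_{i,j} v_{k,\ell} \in E(G)$ exactly when $L_i(j,k) = \ell$, i.e. when $j + i(k-1) \equiv \ell \pmod n$. Since $k$, $i$, $\ell$ are all fixed, this is a single linear congruence in $j$, which has a unique solution $j \equiv \ell - i(k-1) \pmod n$ in $[n]$. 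Hence $|N_G(v_{k,\ell}) \cap Q_i| = 1$, proving (1). (One should also note $N_G(x) \cap Q \subseteq N_G(x)$ consists only of such $w$-vertices, since $E_2$-edges stay inside $P$.)

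For part (2), let $x$ and $y$ be distinct vertices of $P$ and suppose $w_{i,j}, w_{i',j'} \in N_G(x) \cap N_G(y) \cap Q$. The key observation is that $w_{i,j} \in N_G(x) \cap N_G(y)$ means both $x$ and $y$ lie in $N_G(w_{i,j})$; combined with $w_{i',j'} \in N_G(x) \cap N_G(y)$, we get $\{x,y\} \subseteq N_G(w_{i,j}) \cap N_G(w_{i',j'})$. If $w_{i,j} \neq w_{i',j'}$, then Lemma~\ref{N(w)}(3) gives $|N_G(w_{i,j}) \cap N_G(w_{i',j'})| \le 1$, contradicting $|\{x,y\}| = 2$. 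Therefore $w_{i,j} = w_{i',j'}$, which is exactly the statement that $N_G(x) \cap N_G(y) \cap Q$ has at most one element.

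I expect part (2) to be essentially immediate once the duality is spotted — it is a clean reduction to Lemma~\ref{N(w)}(3) — so the only mild obstacle is the bookkeeping in part (1): being careful that the edges incident to a $P$-vertex that land in $Q$ are precisely governed by the $E_1$ relation (and not, say, accidentally affected by the clique edges $E_2$), and that "reading a row of $L_i$" versus "solving for the row index $j$" is handled correctly. Since $L_i$ is a Latin square, the map $j \mapsto L_i(j,k)$ is a bijection of $[n]$ for each fixed $k$, which is really the structural fact underlying the uniqueness in (1); alternatively one invokes primality of $n$ to solve the linear congruence directly. No genuine difficulty is anticipated beyond this routine verification.
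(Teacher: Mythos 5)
Your proof of part (2) is exactly the paper's argument: two distinct common $Q$-neighbors $w_{i,j}\neq w_{i',j'}$ of $x,y$ would put $\{x,y\}\subseteq N_G(w_{i,j})\cap N_G(w_{i',j'})$, contradicting Lemma~\ref{N(w)}(3). For part (1), however, you take a genuinely different (and in one respect more complete) route. The paper argues by contradiction: if $|N_G(x)\cap Q_k|\ge 2$, then two vertices of the same $Q_k$ would have a common neighbor, contradicting the ``in particular'' clause of Lemma~\ref{N(w)}(3); as written this only yields the upper bound $|N_G(x)\cap Q_k|\le 1$, with the existence half of the equality left implicit. You instead compute directly from the edge definition: $w_{i,j}$ is adjacent to $v_{k,\ell}$ iff $L_i(j,k)=\ell$, i.e.\ $j\equiv \ell-i(k-1)\pmod n$, which has exactly one solution $j\in[n]$, so you get existence and uniqueness simultaneously (the equality is actually what is used later, when the cliques $N_G(v)\cap Q$ are asserted to be copies of $K_{n-1}$ in Lemma~\ref{thm:G[Q]}). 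The paper's version has the advantage of reusing already-proved structure (Lemma~\ref{N(w)}(3)) with no computation; yours is self-contained and pins down the exact value. One small remark: primality of $n$ is not needed for your congruence (the coefficient of $j$ is $1$), nor is it needed for the column-bijection fact — either observation suffices, as you note.
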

\begin{proof}
Let $x\in P$.
Suppose that $|N_{G}(x) \cap Q_k|\ge 2$ for some $k \in \{1, \ldots, n-1\}$.
Then  there are two vertices $y,z\in Q_k$ such that $x \in N_{G}(y)\cap N_G(z)$, which implies that $N_{G}(y)\cap N_G(z)\neq \emptyset$.
By (3) of Lemma~\ref{N(w)}, it is impossible. Thus (1) is true.

\medskip
Let $x$ and $y$ be distinct vertices in $P$.
Suppose that $|N_{G}(x)\cap N_G(y)\cap Q|\ge 2$.
Then there exist two vertices $z_1,z_2\in Q$ such that $z_1,z_2 \in N_{G}(x)\cap N_G(y)\cap Q$.
Then $x,y\in N_G(z_1)\cap N_G(z_2)$, and consequently $|N_G(z_1)\cap N_G(z_2)|\ge  2$.
It is a contradiction to (3) of Lemma~\ref{N(w)}.
Thus (2) is true.
\end{proof}


\begin{lemma}\label{independent}
For each $1\le i\le n$, $P_i$ is an independent set of $G^2$.
Also for each $1\le i\le n-1$, $Q_i$ is an independent set of $G^2$.
\end{lemma}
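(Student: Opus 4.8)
The plan is to show directly that any two distinct vertices lying in a common $P_i$ (resp.\ a common $Q_i$) are at distance at least $3$ in $G$, hence non-adjacent in $G^2$. The two cases are handled separately but in the same spirit, combining the description of $E(G)=E_1\cup E_2$ with Lemma~\ref{N(w)}. The key observation throughout is that $v_{i,a}v_{i,b}\notin E(G^2)$ (resp.\ $w_{i,a}w_{i,b}\notin E(G^2)$) precisely when these two vertices are non-adjacent in $G$ \emph{and} have no common neighbour in $G$, so it suffices to enumerate all potential common neighbours and rule each class out.

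First I would treat $P_i$. Fix distinct $v_{i,a},v_{i,b}\in P_i$. Since $E_1$ joins only vertices of $Q$ to vertices of $P$, and each clique $T_j$ appearing in $E_2$ meets $P_i$ in the single vertex $v_{i,j}$, the vertices $v_{i,a}$ and $v_{i,b}$ are non-adjacent in $G$. A common neighbour inside $P$ would be a vertex lying in both $T_a$ and $T_b$, which is impossible since $T_a\cap T_b=\emptyset$ for $a\ne b$. A common neighbour inside $Q$ would be some $w_{s,t}$ adjacent to both $v_{i,a}$ and $v_{i,b}$, contradicting part~(1) of Lemma~\ref{N(w)}, which gives $|N_G(w_{s,t})\cap P_i|=1$. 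Hence $v_{i,a}$ and $v_{i,b}$ are at distance at least $3$ in $G$, so they are non-adjacent in $G^2$.

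Next I would treat $Q_i$. Fix distinct $w_{i,a},w_{i,b}\in Q_i$. No edge of $G$ joins two vertices of $Q$, since edges of $E_1$ run between $Q$ and $P$ and edges of $E_2$ stay inside $P$; thus $w_{i,a}$ and $w_{i,b}$ are non-adjacent in $G$, and every neighbour of either vertex lies in $P$. Consequently any common neighbour would be a vertex of $N_G(w_{i,a})\cap N_G(w_{i,b})$, but the ``in particular'' clause of part~(3) of Lemma~\ref{N(w)} asserts $|N_G(w_{i,a})\cap N_G(w_{i,b})|=0$. Hence $w_{i,a}$ and $w_{i,b}$ are at distance at least $3$ in $G$, so they are non-adjacent in $G^2$, completing both parts of the lemma.

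I do not expect a genuine obstacle here; the only point requiring care is to make the case analysis of potential common neighbours exhaustive (neighbours in $P$ versus neighbours in $Q$ in the first case, and the fact that all neighbours lie in $P$ in the second), so that each appeal to Lemma~\ref{N(w)} is justified.
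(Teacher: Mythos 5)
Your proof is correct and follows essentially the same route as the paper: show the two vertices are non-adjacent in $G$, then exclude every possible common neighbour using the counting statements of Lemma~\ref{N(w)}. The only cosmetic difference is in the $Q_i$ case, where you invoke the ``in particular'' clause of Lemma~\ref{N(w)}(3) directly, while the paper cites Lemma~\ref{N(v)}(1), which is itself deduced from that same clause.
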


\begin{proof}
For $1 \leq i \leq n$, let $v_{i,j}$ and $v_{i,j'}$ be any two vertices in $P_i$.
Suppose that $v_{i,j}$ and $v_{i,j'}$ are adjacent in $G^2$.
Then there exists a common neighbor $x$ of $v_{i,j}$ and $v_{i,j'}$ since $v_{i,j}$ and $v_{i,j'}$ are not adjacent in $G$.
It follows that $x \in Q$ by Construction~\ref{construction}.
Thus  $v_{i,j}, v_{i,j'}\in N_G(x)\cap P_i$, and so $|N_G({x})\cap P_i|\ge 2$.
It is a contradiction to (1) of Lemma~\ref{N(w)}.
Therefore, $P_i$ is an independent set in $G^2$.

Next we will show that $Q_i$ is an independent set in $G^2$. For $1 \leq i \leq n-1$, let  $w_{i,j}$ and $w_{i,j'}$ be distinct vertices in $Q_i$.
Suppose that $w_{i,j}$ and $w_{i,j'}$ are adjacent in $G^2$.
Then there exists a common neighbor $y$ of $w_{i,j}$ and $w_{i,j'}$ since $w_{i,j}$ and $w_{i,j'}$ are not adjacent in $G$.
It follows that $y \in P$ by the construction of $G$.
Thus  $w_{i,j}, w_{i,j'}\in N_G(y)\cap Q_i$, and so $|N_G({y})\cap Q_i|\ge 2$.
It is a contradiction to (1) of Lemma~\ref{N(v)}.
Therefore, $Q_i$ is an independent set in $G^2$.
\end{proof}

\begin{lemma}\label{adjacent_all}
For any vertex $x\in P$ and for any vertex $y\in Q$, $x$ and $y$ are adjacent in $G^2$.
\end{lemma}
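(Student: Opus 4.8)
The plan is to fix $x\in P$ and $y\in Q$, write them in coordinates as $x=v_{k,m}$ with $k,m\in[n]$ and $y=w_{i,j}$ with $i\in[n-1]$ and $j\in[n]$, and then split into two cases according to whether $x$ is already a neighbor of $y$ in $G$. In the first case $x$ and $y$ are adjacent in $G$, hence in $G^2$, and there is nothing to do. So all the work lies in the case $x\notin N_G(y)$, where I must exhibit a common $G$-neighbor of $x$ and $y$.

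The natural candidate for such a common neighbor comes from the clique $T_m$. Indeed, from Construction~\ref{construction} the neighborhood $N_G(x)$ within $P$ is exactly $T_m\setminus\{v_{k,m}\}$ (the $E_2$-edges), while $N_G(y)=N_G(w_{i,j})\subseteq P$ consists of one vertex from each $T_\ell$, namely $v_{\ell,L_i(j,\ell)}$, by (\ref{Neighbor}). So I want some $k'\in[n]$ with $k'\neq k$ and $L_i(j,k')=m$; then $v_{k',m}$ lies in $T_m$ (hence is adjacent to $x$) and also lies in $N_G(y)$. Using $L_i(j,k')\equiv j+i(k'-1)\pmod{n}$, the requirement $L_i(j,k')=m$ becomes $i(k'-1)\equiv m-j\pmod{n}$. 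Since $n$ is prime and $1\le i\le n-1$, the element $i$ is invertible modulo $n$, so there is a unique $k'\in[n]$ with $k'\equiv 1+i^{-1}(m-j)\pmod{n}$, and this $k'$ indeed satisfies $L_i(j,k')=m$ (both sides lying in $[n]$).

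It then remains to check $k'\neq k$, and this is precisely where the case hypothesis enters: if $k'=k$, then $m\equiv j+i(k-1)\equiv L_i(j,k)\pmod{n}$, so $v_{k,m}=v_{k,L_i(j,k)}\in N_G(w_{i,j})$ by (\ref{Neighbor}), contradicting $x\notin N_G(y)$. Hence $k'\neq k$, so $v_{k,m}$ and $v_{k',m}$ are two distinct vertices of the clique $T_m$ and are therefore adjacent in $G$; thus $v_{k',m}$ is a common $G$-neighbor of $x$ and $y$, giving $d_G(x,y)\le 2$, i.e., $x$ and $y$ are adjacent in $G^2$.

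I do not expect a real obstacle here. The only points requiring care are the bookkeeping between residues modulo $n$ and representatives in $[n]$ (so that ``unique $k'$'' is justified), and invoking invertibility of $i$ only where it is valid — which is fine since $i\le n-1$ rules out $i\equiv 0\pmod{n}$. The argument also quietly reuses the structural facts from Construction~\ref{construction} that each $T_j$ is a clique and that edges incident to $Q$ go only to $P$.
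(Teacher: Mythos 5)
Your proof is correct and follows essentially the same route as the paper: locate $x$ in the clique $T_m$ and produce a neighbor of $y$ inside $T_m$, so that $d_G(x,y)\le 2$. The only difference is that you re-derive the needed intersection fact $N_G(y)\cap T_m\neq\emptyset$ directly from the formula (\ref{eq:Latin}) using invertibility of $i$ modulo the prime $n$, whereas the paper simply cites part (2) of Lemma~\ref{N(w)}, which rests only on the Latin square property of $L_i$ and needs no case split on whether $x\in N_G(y)$.
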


\begin{proof}
Let $x$ and $y$ be vertices in $P$ and $Q$, respectively.
Since $P$ is the disjoint union of $T_1, \ldots, T_n$ which are defined in Construction \ref{construction},
 $x\in T_k$ for some $k \in \{1, \ldots, n \}$.
By (2) of Lemma~\ref{N(w)}, $N_G(y)\cap T_k \neq \emptyset$ and $G[T_k]$ induces a complete subgraph in $G$. Therefore the distance between $x$ and $y$ in $G$ is at most 2.
Thus $x$ is adjacent to $y$ in $G^2$.
\end{proof}


Now we will show that the subgraphs induced by $P$ and $Q$ in $G^2$, denoted $G^2[P]$ and $G^2[Q]$, respectively, are  complete multipartite graphs.
Let $K_{n*r}$ denote the complete multipartite graph with $r$ partite sets in which each partite set has size $n$.

\begin{lemma}\label{thm:G[P]}
The subgraph induced by $P$ in $G^2$, denoted $G^2[P]$,  is $K_{n*n}$ whose partite sets are $P_1$, $P_2$, \ldots, $P_n$.
\end{lemma}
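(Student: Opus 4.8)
The plan is to show that $G^2[P]$ has exactly the edges of the complete multipartite graph with parts $P_1,\dots,P_n$, i.e. that two vertices of $P$ are adjacent in $G^2$ if and only if they lie in different parts $P_i$. One direction is already done: Lemma~\ref{independent} gives that each $P_i$ is independent in $G^2$, so no edges appear inside a part. It remains to prove that if $x\in P_i$ and $y\in P_{i'}$ with $i\neq i'$, then $x$ and $y$ are adjacent in $G^2$, i.e. $\mathrm{dist}_G(x,y)\le 2$.

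So write $x=v_{i,j}$ and $y=v_{i',j'}$ with $i\neq i'$. First I would dispose of the easy case $j=j'$: then $x,y\in T_j$, and since $G[T_j]$ is a clique, $x$ and $y$ are adjacent already in $G$, hence in $G^2$. For the main case $j\neq j'$, the strategy is to produce a common neighbor in $Q$. I want a vertex $w_{a,b}\in Q$ (for some $a\in[n-1]$, $b\in[n]$) adjacent to both $v_{i,j}$ and $v_{i',j'}$. By~(\ref{Neighbor}), $w_{a,b}$ is adjacent to $v_{i,j}$ exactly when $L_a(b,i)=j$, that is $b+a(i-1)\equiv j\pmod n$, and adjacent to $v_{i',j'}$ exactly when $b+a(i'-1)\equiv j'\pmod n$. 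Subtracting, $a(i-i')\equiv j-j'\pmod n$. Since $n$ is prime and $i\neq i'$, $(i-i')$ is invertible mod $n$, so there is a unique $a\in\{0,1,\dots,n-1\}$ solving this; then $b$ is determined by $b\equiv j-a(i-1)\pmod n$, and we pick the representatives of $a,b$ in the appropriate ranges.

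The one subtlety — and the step I expect to be the main obstacle — is that the Latin squares available are $L_1,\dots,L_{n-1}$ only, so I need the solution $a$ to satisfy $a\in[n-1]$, i.e. $a\not\equiv 0\pmod n$. But $a\equiv 0$ would force $j-j'\equiv (i-i')\cdot 0\equiv 0\pmod n$, hence $j=j'$ (as $1\le j,j'\le n$), contradicting the assumption $j\neq j'$ of this case. So in fact $a\in\{1,\dots,n-1\}$, and $w_{a,b}$ is a genuine vertex of $Q_a$. This $w_{a,b}$ is then a common neighbor of $v_{i,j}$ and $v_{i',j'}$ in $G$, giving $\mathrm{dist}_G(v_{i,j},v_{i',j'})=2$ and hence an edge in $G^2$.

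Putting the pieces together: within each $P_i$ there are no edges (Lemma~\ref{independent}), and between any two distinct parts every pair is adjacent (the case analysis above). Therefore $G^2[P]$ is precisely the complete multipartite graph $K_{n*n}$ with partite sets $P_1,\dots,P_n$, as claimed. I would present it essentially in this order: recall independence from Lemma~\ref{independent}, then fix $x=v_{i,j}$, $y=v_{i',j'}$ in different parts, handle $j=j'$ via the clique $T_j$, and for $j\neq j'$ solve the congruence for $(a,b)$, checking $a\in[n-1]$, to exhibit the common neighbor $w_{a,b}$.
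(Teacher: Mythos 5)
Your proof is correct, but it takes a genuinely different route from the paper. You argue directly: after invoking Lemma~\ref{independent} for the independence of each $P_i$, you show every pair $v_{i,j}, v_{i',j'}$ with $i\neq i'$ is at distance at most $2$ in $G$, either because $j=j'$ puts both in the clique $T_j$, or by explicitly solving the congruences $b+a(i-1)\equiv j$ and $b+a(i'-1)\equiv j'\pmod n$ for a common neighbor $w_{a,b}\in Q$, where primality of $n$ makes $i-i'$ invertible and the check $a\not\equiv 0$ (so that $L_a$ actually exists among $L_1,\dots,L_{n-1}$) is exactly the point you flagged; that check is right, since $a\equiv 0$ would force $j=j'$. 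The paper instead proves the lemma by an edge-counting argument: the family $\mathcal{F}=\{N_G(w): w\in Q\}\cup\{T_1,\dots,T_n\}$ consists of $n^2$ mutually edge-disjoint copies of $K_n$ inside $G^2[P]$ (edge-disjointness coming from Lemma~\ref{N(w)}), giving at least $n^2\binom{n}{2}$ edges, while independence of the $P_i$ gives at most $\binom{n^2}{2}-n\binom{n}{2}=n^2\binom{n}{2}$ edges, so equality holds and $G^2[P]=K_{n*n}$. Your construction is more elementary and self-contained in that it pinpoints, for each pair, the actual common neighbor, but it leans on the explicit algebraic formula~(\ref{eq:Latin}) for the Latin squares; the paper's counting argument works at the level of the structural properties isolated in Lemma~\ref{N(w)} (one neighbor per part, per $T_k$, and pairwise intersections of size at most one), which is why the same template transfers verbatim to the companion Lemma~\ref{thm:G[Q]} for $G^2[Q]$, where a direct construction would need a separate computation.
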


\begin{proof}
Let \[\mathcal{F}=\{ N_G(w) :  w\in Q \}\cup \{ T_1,T_2,\ldots, T_n\}.\]
Note that for each $w\in Q$,
the subgraph induced by $N_G(w)$ in $G^2$ is a complete graph and $N_G(w) \subset P$.
And each $T_i$ is a clique in $G^2$ and $T_i \subset P$ by the definition of $T_i$.
Therefore, $\mathcal{F}$ is a family of cliques in $G^2[P]$.

We will show that
for any $X,Y\in \mathcal{F}$, we have $|X\cap Y|\le 1$.
By (3) of Lemma~\ref{N(w)}, for any two vertices $x, y\in Q$, $|N_G(x)\cap N_G(y)|\le 1$.
By the definition, $|T_i\cap T_j|=0$.
Also, by (2) of Lemma~\ref{N(w)}, $|N_G(w)\cap T_j|= 1$ for any $w$ in $Q$ and for any $1 \leq j \leq n$.
Thus for any $X,Y\in \mathcal{F}$, we have $|X\cap Y|\le 1$,
which implies  that any two cliques of $\mathcal{F}$ are edge-disjoint.

Note that $|\mathcal{F}|=|Q|+n=n(n-1)+n=n^2$.
Thus $\mathcal{F}$ is a family of $n^2$ mutually edge-disjoint cliques in
$G^2[P]$. 
As each clique of $\mathcal{F}$ is $K_n$ and $K_n$ has ${n\choose 2}$ edges, we have
\[|E(G^2[P])| \ge n^2\times {n\choose 2} =n^2 \times \frac{n(n-1)}{2}.\]
On the other hand, by Lemma~\ref{independent},
$E(G^2[P])$ has at most ${n^2 \choose 2} - n\times {n\choose 2}$ edges, since each of $P_1,P_2,\ldots,P_n$ is an independent set in $G^2$.
Note that
\[{n^2 \choose 2} - n\times {n\choose 2}=\frac{n^2(n^2-1)}{2}-\frac{n^2(n-1)}{2} =  n^2 \times \frac{n(n-1)}{2}.\]
Thus \[|E(G^2[P])| = n^2 \times \frac{n(n-1)}{2}.\]
This implies that  $G^2[P]$  is  $K_{n*n}$ whose partite sets are $P_1$, $P_2$, \ldots, $P_n$.
\end{proof}


The following lemma holds by a similar  argument with Lemma~\ref{thm:G[P]}.

\begin{lemma}\label{thm:G[Q]}
 The subgraph induced by $Q$ in $G^2$, denoted $G^2[Q]$,  is  $K_{n*(n-1)}$ whose partite sets are
 $Q_1, \ldots, Q_{n-1}$.
\end{lemma}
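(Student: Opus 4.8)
The plan is to mirror the proof of Lemma~\ref{thm:G[P]}, transposing the roles of $P$ and $Q$. First I would set up the analogous family of cliques: let $\mathcal{F}' = \{\, N_G(v)\cap Q : v\in P \,\}$, where for $v\in P$ the set $N_G(v)\cap Q$ induces a complete subgraph of $G^2[Q]$ because any two neighbors of $v$ in $Q$ are at distance at most $2$ in $G$ (through $v$). By Lemma~\ref{N(v)}(1), $|N_G(v)\cap Q| = n-1$ for every $v\in P$, so each member of $\mathcal{F}'$ is a $K_{n-1}$. Thus $\mathcal{F}'$ is a family of cliques in $G^2[Q]$, each on $n-1$ vertices.

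Next I would check that these cliques are mutually edge-disjoint. By Lemma~\ref{N(v)}(2), for distinct $x,y\in P$ we have $|N_G(x)\cap N_G(y)\cap Q|\le 1$, so any two members of $\mathcal{F}'$ share at most one vertex, hence no edge. Now I would count: I claim $\mathcal{F}'$ has $n^2$ \emph{distinct} members, one per vertex of $P$ — here I must be slightly careful, since in principle $N_G(v)\cap Q = N_G(v')\cap Q$ could happen for $v\ne v'$, but if two such $(n-1)$-sets coincided they would share $n-1\ge 2$ vertices, contradicting Lemma~\ref{N(v)}(2); so indeed $|\mathcal{F}'| = |P| = n^2$. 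Therefore $G^2[Q]$ contains at least $n^2\binom{n-1}{2}$ edges. On the other hand, by Lemma~\ref{independent} each $Q_i$ ($1\le i\le n-1$) is independent in $G^2$, so $|E(G^2[Q])| \le \binom{n(n-1)}{2} - (n-1)\binom{n}{2}$. A short computation shows
\[
\binom{n(n-1)}{2} - (n-1)\binom{n}{2} = \frac{n(n-1)\bigl(n(n-1)-1\bigr)}{2} - \frac{(n-1)\,n(n-1)}{2} = n^2\binom{n-1}{2},
\]
so both bounds coincide, forcing equality throughout: $G^2[Q]$ is exactly the complete multipartite graph with the $Q_i$ as partite classes, i.e.\ $K_{n*(n-1)}$.

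The only real subtlety — the step I would flag as needing the most care — is the arithmetic identity equating the edge upper bound with $n^2\binom{n-1}{2}$, together with the observation that matching lower and upper bounds on $|E(G^2[Q])|$ simultaneously pins down \emph{both} that the $(n-1)\binom{n}{2}$ missing edges are exactly those inside the $Q_i$'s (giving completeness between distinct classes) and that the $\mathcal{F}'$-cliques tile all remaining edges; once the two counts agree, no edge is left unaccounted for and none is double-counted, which is precisely the statement that $G^2[Q] = K_{n*(n-1)}$. Everything else is a direct transcription of the previous lemma's argument with $P\leftrightarrow Q$ and the pair of Lemmas~\ref{N(w)},~\ref{N(v)} swapped accordingly, which is why it is fair to say the result "holds by a similar argument."
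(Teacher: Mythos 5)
Your proof is correct and is essentially the paper's own argument: the same clique family $\{N_G(v)\cap Q : v\in P\}$, edge-disjointness via Lemma~\ref{N(v)}(2), and the same two-sided edge count against the bound forced by Lemma~\ref{independent}. Your extra check that the $n^2$ cliques are pairwise distinct (so the lower bound really counts $n^2$ cliques) is a small refinement of a point the paper leaves implicit, not a different approach.
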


\begin{proof}
Let \[\mathcal{F}=\{ N_G(v)\cap Q \mid  v\in P \}.\]
For each $v\in P$, the subgraph induced by $N_G(v)\cap Q$ in $G^2$ is a complete graph.
Therefore, $\mathcal{F}$ is a family of cliques in $G^2[Q]$.
By (2) of Lemma~\ref{N(v)}, for any two vertices $x, y\in P$, we have $|N_G(x)\cap N_G(y)\cap Q|\le 1$.
Therefore any two cliques of $\mathcal{F}$ is edge-disjoint.

Note that $|\mathcal{F}|=|P|=n^2$.
Thus $\mathcal{F}$ is a family of $n^2$ mutually edge-disjoint cliques in
$G^2[Q]$. 
As each clique of $\mathcal{F}$ is  $K_{n-1}$ and  $K_{n-1}$ has ${n-1\choose 2}$ edges, we have
\[|E(G^2[Q])| \ge n^2 \times {n-1\choose 2} =n^2\times\frac{(n-1)(n-2)}{2}.\]
On the other hand, by Lemma~\ref{independent},
$E(G^2[Q])$ has at most ${n^2-n \choose 2} - (n-1)\times {n\choose 2}$ edges, since each of $Q_1$,\ldots, $Q_{n-1}$ is an independent set in $G^2$.
Note that
\[ {n^2-n \choose 2} - (n-1)\times {n\choose 2} =\frac{(n^2-n)(n^2-n-1)}{2} -\frac{(n-1)n(n-1)}{2} =\frac{n^2(n-1)(n-2)}{2} .\]
Thus \[|E(G^2[Q])| = n^2 \times \frac{(n-1)(n-2)}{2}.\]
This implies that  $G^2[Q]$  is  $K_{n*(n-1)}$ whose partite sets are $Q_1$,\ldots, $Q_{n-1}$.
\end{proof}


Now by Lemmas~\ref{independent}, ~\ref{adjacent_all}, ~\ref{thm:G[P]} and~\ref{thm:G[Q]}, we conclude that
the square $G^2$ of $G$ is the complete multipartite graph $K_{n*(2n-1)}$  whose partite sets are $P_1$, $P_2$, $\ldots$, $P_n$, $Q_1$, $\ldots$, $Q_{n-1}$,  which implies   the following main theorem.
%

\begin{theorem}\label{G2}
For each prime $n \geq 3$,
there exists a graph $G$ such that $G^2$ is the complete multipartite graph $K_{n*(2n-1)}$.
\end{theorem}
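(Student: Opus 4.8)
The plan is to assemble the proof of Theorem~\ref{G2} entirely from the structural lemmas already established, so that essentially no further computation is needed. Given a prime $n \geq 3$, I take $G$ to be the graph produced by Construction~\ref{construction}, which has vertex set $P_1 \cup \cdots \cup P_n \cup Q_1 \cup \cdots \cup Q_{n-1}$, a collection of $2n-1$ sets each of size $n$. The goal is to verify that $G^2$ is precisely $K_{n*(2n-1)}$ with these $2n-1$ sets as its partite classes; that is, two vertices of $G^2$ are adjacent if and only if they lie in different sets among $P_1,\dots,P_n,Q_1,\dots,Q_{n-1}$.

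First I would record the "non-edges" direction: by Lemma~\ref{independent}, each $P_i$ ($1 \le i \le n$) and each $Q_i$ ($1 \le i \le n-1$) is an independent set of $G^2$, so no two vertices in the same part are adjacent in $G^2$. Next I would collect the "edges" direction by splitting into the three types of cross-pairs. For a vertex of some $P_i$ and a vertex of some $P_j$ with $i \neq j$, Lemma~\ref{thm:G[P]} says $G^2[P]$ is exactly $K_{n*n}$ with parts $P_1,\dots,P_n$, so such a pair is adjacent in $G^2$. For a vertex of $Q_i$ and a vertex of $Q_j$ with $i \neq j$, Lemma~\ref{thm:G[Q]} says $G^2[Q]$ is exactly $K_{n*(n-1)}$ with parts $Q_1,\dots,Q_{n-1}$, so again such a pair is adjacent. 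Finally, for a vertex $x \in P$ and a vertex $y \in Q$, Lemma~\ref{adjacent_all} guarantees $x$ and $y$ are adjacent in $G^2$. Combining these, every pair of vertices lying in distinct parts is adjacent in $G^2$, and every pair lying in a common part is non-adjacent; hence $G^2 = K_{n*(2n-1)}$.

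Since $G$ has $2n^2 - n$ vertices partitioned into $2n-1$ classes of size $n$, this exhibits, for each prime $n \ge 3$, a graph $G$ whose square is $K_{n*(2n-1)}$, which is the assertion of the theorem. I do not anticipate a genuine obstacle here: the four lemmas were designed to cover exactly the four cases (same-$P$-part, same-$Q$-part, different $P$-parts / different $Q$-parts, and $P$-to-$Q$), so the only care needed is to make sure the case analysis is exhaustive — in particular, to note that "different parts" for two vertices both in $P$ falls under Lemma~\ref{thm:G[P]}, for two vertices both in $Q$ under Lemma~\ref{thm:G[Q]}, and the mixed case under Lemma~\ref{adjacent_all} — and that "same part" is fully handled by Lemma~\ref{independent}. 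If anything is slightly delicate, it is only the bookkeeping that the $2n-1$ sets are genuinely pairwise disjoint and each of size $n$, which is immediate from Construction~\ref{construction}.
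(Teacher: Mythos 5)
Your proposal is correct and follows exactly the paper's own route: the paper deduces Theorem~\ref{G2} directly by combining Lemmas~\ref{independent}, \ref{adjacent_all}, \ref{thm:G[P]} and~\ref{thm:G[Q]}, with the graph $G$ of Construction~\ref{construction} and the parts $P_1,\dots,P_n,Q_1,\dots,Q_{n-1}$. Your write-up merely makes the case analysis (same part, two $P$-parts, two $Q$-parts, mixed $P$--$Q$) explicit, which is fine and matches the intended argument.
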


The following lower bound on the list chromatic number of a complete multipartite graph was obtained in \cite{Vetrik2012}.

\begin{theorem}\label{thm:Vetrik}{\rm (Theorem 4, \cite{Vetrik2012})}
For a complete multipartite graph $K_{n*r}$ with $n,r \ge 2$,
\[\chi_\ell (K_{n*r}) > (n-1)\lfloor\frac{2r-1}{n} \rfloor.\]
\end{theorem}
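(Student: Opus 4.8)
The plan is to prove the bound in its equivalent ``lower bound via a bad list'' form: setting $t:=\lfloor\frac{2r-1}{n}\rfloor$ and $k:=(n-1)t$ (so that $nt\le 2r-1$), I will construct a list assignment $L$ on $K_{n*r}$ with $|L(v)|=k$ for every vertex $v$ that admits no proper $L$-coloring; this forces $\chi_\ell(K_{n*r})\ge k+1$, which is the claim.

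The first step is a reformulation that strips the problem down to pure combinatorics on the colors. Write the parts as $P_1,\dots,P_r$ with $P_i=\{v_{i,1},\dots,v_{i,n}\}$. Since any two vertices in distinct parts are adjacent, a proper $L$-coloring of $K_{n*r}$ exists if and only if there are pairwise disjoint color sets $C_1,\dots,C_r$ with $C_i\cap L(v)\neq\emptyset$ for every $i\in[r]$ and every $v\in P_i$: from such $C_i$ one colors each $v\in P_i$ by any color of $C_i\cap L(v)$, and conversely one takes $C_i$ to be the set of colors used on $P_i$. So the whole task is to build $L$ for which no such family $(C_i)_{i\in[r]}$ exists. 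The basic device for making each $C_i$ costly is to keep the $n$ lists assigned to the vertices of a single part pairwise disjoint, which forces $|C_i|\ge n$; the real work is then to arrange the overlaps \emph{across} parts so that $r$ such sets cannot be chosen pairwise disjointly.

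For the construction I would spread a deliberately small color universe over the parts through an incidence structure. Split the universe into $t$ blocks $A_1,\dots,A_t$, pairwise disjoint, and attach the blocks to the parts so that each part meets at most two blocks — this is exactly what the inequality $nt\le 2r-1<2r$ buys. Each vertex of each part takes $n-1$ colors from every block, so that $|L(v)|=(n-1)t=k$; the $(n-1)$-subset of a block $A_s$ chosen by a vertex of $P_i$ is prescribed, whenever $P_i$ is one of the parts attached to block $s$, by a Latin-square/affine-type rule (in the spirit of Construction~\ref{construction}) so that, \emph{within} block $s$, the parts attached to it cannot all be served by pairwise disjoint transversals drawn from $A_s$ alone — each such part is ``one color short'' inside its block and can only be rescued by borrowing a color through the \emph{other} block it meets. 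To conclude, I would assume a disjoint family $(C_i)$ exists and double-count the (part, incident block) pairs: each block forces its attached parts to consume, together, essentially all of $A_s$, each part is attached to at most two blocks, and $nt\le 2r-1$ makes the resulting demand-versus-supply ledger inconsistent, a contradiction.

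The main obstacle is the design step inside the blocks: one must choose the per-vertex $(n-1)$-subsets so that \emph{simultaneously} every list has size exactly $k$ and the system of disjoint transversals is genuinely infeasible — in effect, a correct and tight proof of non-colorability of the constructed instance. The reformulation and the ``each part meets at most two blocks'' skeleton are routine; calibrating the local Latin-square obstruction and the borrowing mechanism so that the global count comes out to the exact value $(n-1)\lfloor\frac{2r-1}{n}\rfloor$ rather than something weaker is where the care is needed, particularly in the regime where $t$ is small and each block has to be attached to many parts at once.
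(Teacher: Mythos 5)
There is a genuine gap: your argument never actually produces a non-colorable list assignment. The reformulation via pairwise disjoint color classes $C_1,\dots,C_r$ is fine, but the heart of the matter --- choosing the per-vertex $(n-1)$-subsets inside each block so that disjoint transversals are ``genuinely infeasible'' --- is exactly the step you defer to an unspecified ``Latin-square/affine-type rule'' and a ``borrowing mechanism,'' and you yourself flag it as the main obstacle. As written, the claim that the parts attached to a block cannot all be served from that block alone is asserted, not proved, so the proposal is a plan rather than a proof. The sketch also contains an internal inconsistency: you say each part meets at most two of the $t$ blocks (citing $nt\le 2r-1$), yet you also say every vertex takes $n-1$ colors from \emph{every} block in order to reach list size $(n-1)t$; both cannot hold unless $t\le 2$. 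Similarly, the opening device of making the $n$ lists within a part pairwise disjoint would force a color universe of size at least $n(n-1)t$, which for $n\ge 3$ typically exceeds the $rn$ colors that $r$ disjoint classes of size $n$ would need, so that device cannot by itself yield a contradiction --- and indeed you quietly abandon it, but nothing concrete replaces it.

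For comparison, the actual argument (Vetr\'{\i}k's, reproduced in the paper) is much simpler and avoids any design-theoretic construction: partition a universe of exactly $2r-1$ colors into $n$ blocks $A_1,\dots,A_n$ of nearly equal size, so $|A_j|\ge\lfloor\frac{2r-1}{n}\rfloor$, and give the $k$th vertex of \emph{every} part the list $\bigcup_j A_j\setminus A_k$, of size at least $(n-1)\lfloor\frac{2r-1}{n}\rfloor$. No single color lies in all $n$ lists of a part (any color is missing from the list of the vertex indexed by its own block), so each part uses at least two colors; since distinct parts must use disjoint color sets, any proper coloring from these lists needs at least $2r$ colors, but only $2r-1$ exist. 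Note that the blocks there are indexed by the $n$ positions within a part, not by $t=\lfloor\frac{2r-1}{n}\rfloor$ as in your setup, and the obstruction is the trivial count ``$2r>2r-1$'' rather than a transversal-infeasibility argument inside blocks. If you want to salvage your route, the missing construction would have to be carried out in full and verified to be non-colorable, which is considerably harder than the two-line counting argument above.
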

\begin{proof}
The proof 
is the same as in \cite{Vetrik2012}.  We include it here for the convenience of readers.

Let $A_1, \ldots, A_n$ be a family of disjoint color sets such that $||A_i| - |A_j|| \leq 1$ for each $1 \leq i,j \leq n$ and $|\bigcup_{j=1}^n A_j| = 2r -1$.  Then $|A_j| \geq \lfloor \frac{2r-1}{n} \rfloor$ for each $j \in \{1, \ldots, n\}$.

Define a list assignment $L$ as follows. For $1 \leq i \leq  r$, let $V_i = \{v_{i1}, \ldots, v_{in} \}$ be
the $i$th partite set in $K_{n \star r}$.  For each $v_{ik} \in V_i$, define $L(v_{ik}) = \bigcup_{j=1}^n A_j \setminus A_k$.  Then $|L(x)| \geq (n-1)\lfloor \frac{2r-1}{n} \rfloor$ for each vertex $x$ in $K_{n \star r}$.

Note that in any coloring from these lists, at least two colors on each partite $V_i$ are used.
Thus at least $2r$ colors are needed to have a proper coloring from the lists, but $|\bigcup_{j=1}^n A_j| = 2r -1$.  Hence $K_{n \star r}$ is not $L$-colorable.  This implies that $\chi_\ell (K_{n*r}) > (n-1)\lfloor\frac{2r-1}{n} \rfloor$.
\end{proof}

Consequently, we obtain that  $\chi_{\ell}(G) >\chi(G)$ by the following theorem.

\begin{theorem} \label{main-theorem}
For each prime $n \geq 3$, if $G$ is the graph defined in Construction~\ref{construction}, then
\[ \chi_{\ell}(G^2) - \chi(G^2)  \geq  n-1. \]
\end{theorem}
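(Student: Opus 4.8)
The plan is to combine the structural result of Theorem~\ref{G2} with the two quantitative facts about complete multipartite graphs: the lower bound on the list chromatic number from Theorem~\ref{thm:Vetrik} and the easy exact value of the ordinary chromatic number. First I would invoke Theorem~\ref{G2} to write $G^2 = K_{n*(2n-1)}$, so that $r = 2n-1$ in the notation of the multipartite graphs. Then I would compute $\chi(G^2)$ directly: a complete multipartite graph with $r$ parts has chromatic number exactly $r$, since the $r$ parts are precisely the colour classes of an optimal proper colouring and any proper colouring must use a distinct colour on vertices from different parts (as all such pairs are adjacent); more simply, $K_{n*(2n-1)}$ contains $K_{2n-1}$ as a subgraph (one vertex per part) and is $(2n-1)$-colourable by colouring each part with its own colour. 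Hence $\chi(G^2) = 2n-1$.

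Next I would apply Theorem~\ref{thm:Vetrik} with the parameters $n$ and $r = 2n-1$ (noting $n \geq 3 \geq 2$ and $r = 2n-1 \geq 5 \geq 2$, so the hypotheses are met). This gives
\[
\chi_\ell(G^2) > (n-1)\left\lfloor \frac{2r-1}{n} \right\rfloor = (n-1)\left\lfloor \frac{2(2n-1)-1}{n} \right\rfloor = (n-1)\left\lfloor \frac{4n-3}{n} \right\rfloor.
\]
Since $4n - 3 = 4n - 3$ and $0 \leq n - 3 < n$ for $n \geq 3$, we have $3n \leq 4n-3 < 4n$, so $\lfloor (4n-3)/n \rfloor = 3$. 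Therefore $\chi_\ell(G^2) > 3(n-1)$, and since $\chi_\ell$ is an integer, $\chi_\ell(G^2) \geq 3(n-1) + 1 = 3n - 2$.

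Finally I would subtract: $\chi_\ell(G^2) - \chi(G^2) \geq (3n - 2) - (2n - 1) = n - 1$, which is the desired bound. This also immediately yields $\chi_\ell(G^2) > \chi(G^2)$ for every prime $n \geq 3$, disproving the List Square Coloring Conjecture, and shows the gap is unbounded as $n \to \infty$ over primes.

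I do not anticipate a genuine obstacle here: all the hard work has already been done in establishing Theorem~\ref{G2} (that the construction really produces $K_{n*(2n-1)}$) and in Theorem~\ref{thm:Vetrik}. The only points requiring care are the arithmetic simplification of the floor $\lfloor (4n-3)/n \rfloor = 3$ (valid precisely because $n \geq 3$, which is why the primality/size hypothesis $n \geq 3$ is needed rather than merely $n \geq 2$) and the integrality step that upgrades the strict inequality $\chi_\ell(G^2) > 3(n-1)$ to $\chi_\ell(G^2) \geq 3n-2$. Both are routine.
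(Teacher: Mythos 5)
Your proposal is correct and follows essentially the same route as the paper: invoke Theorem~\ref{G2} to get $G^2=K_{n*(2n-1)}$, note $\chi(G^2)=2n-1$, apply Theorem~\ref{thm:Vetrik} with $r=2n-1$ to get $\chi_\ell(G^2)>(n-1)\lfloor (4n-3)/n\rfloor\geq 3(n-1)$, and conclude by integrality and subtraction. Your extra care with the floor computation and the integrality step only makes explicit what the paper leaves implicit.
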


\begin{proof}
It is clear that $\chi(G^2)=2n-1$  by Theorem~\ref{G2}.
On the other hand, by Theorems~\ref{G2} and~\ref{thm:Vetrik},
\begin{eqnarray*}
\chi_\ell(G^2)&=&\chi_{\ell} (K_{n*(2n-1)}) > (n-1)\lfloor\frac{4n-3}{n} \rfloor \geq 3(n-1),
\end{eqnarray*}
when $n\ge 3$.
Thus for $n \geq 3$,
\[ \chi_\ell(G^2) - \chi(G^2) \geq n-1.\]
\end{proof}

\begin{remark} \rm
Since there are infinitely many primes, from Theorem~\ref{main-theorem},
the gap $\chi_l(G^2) - \chi(G^2)$ can be arbitrary large.
\end{remark}

%
%


\end{document}